\def\R{\mathbb{R}}
\def\C{\mathbb{C}}
\def\H{\mathbb{H}}
\def\M{\mathbb{M}}
\def\r{\mathbb R}
\def\h{\mathbb H}
\def\0{\mathbf 0}
\newtheorem{theorem}{Theorem}
\newtheorem{lemma}[theorem]{Lemma}
\newtheorem{remark}[theorem]{Remark}
\newtheorem{assertion}[theorem]{Assertion}
\begin{document}

\begin{title}
  {Simply-connected minimal surfaces with finite total curvature in
    $\H^2\times\R$}
\end{title}
\vskip .2in

\begin{author}
  {Juncheol Pyo\thanks{Research partially supported by the CEI BioTIC
      GENIL project (CEB09-0010) and the Basic Science Research Program through the National Research Foundation of Korea (NRF) funded by the Ministry of Education, Science and Technology (2012-0007728).}  and M.
    Magdalena Rodr\'\i guez\thanks{Research partially supported by the
      MEC-FEDER Grant no. MTM2011-22547 and the Regional
      J. Andaluc\'\i a Grant no. P09-FQM-5088.} }
\end{author}
\maketitle

\begin{abstract}
  Laurent Hauswirth and Harold Rosenberg developed in~\cite{hr} the
  theory of minimal surfaces with finite total curvature in
  $\H^2\times\R$. They showed that the total curvature of one such a
  surface must be a non-negative integer multiple of $-2\pi$. The
  first examples appearing in this context are vertical geodesic
  planes and Scherk minimal graphs over ideal polygonal domains. Other
  non simply-connected examples have been constructed recently
  in~\cite{mmr, mr,p}.

  In the present paper, we show that the only complete minimal
  surfaces in $\H^2\times\R$ of total curvature $-2\pi$ are Scherk
  minimal graphs over ideal quadrilaterals. We also construct properly
  embedded simply-connected minimal surfaces with total curvature
  $-4k\pi$, for any integer $k\geq 1$, which are not Scherk
  minimal graphs over ideal polygonal domains.
\end{abstract}

\noindent{\it Mathematics Subject Classification:} Primary 53A10,
Secondary 49Q05, 53C42

\section{Introduction}

In the classical theory of minimal surfaces in $\R^3$, the ones better
known are those with finite total curvature. We recall that the total
curvature of a surface $M$ is defined as $C(M)=\int_M K$, where $K$
denotes the Gauss curvature of $M$. If a minimal surface $M$ of $\R^3$
has finite total curvature (i.e.  $|C(M)|<+\infty$) then either $M$ is
a plane or it must be $C(M)=-4\pi k$, for some integer $k\geq 1$, and the
equality only holds for $M$ being the catenoid or Enneper's surface
(see~\cite[Theorems 9.2 and 9.4]{oss}).

In the last decade, the geometry of minimal surfaces in $\H^2\times\R$
has been actively studied, and many examples have been constructed
(see for instance~\cite{cr, h, mrr, mor, nr, et}).
Hauswirth and Rosenberg started in~\cite{hr} the study of complete
minimal surfaces of finite total curvature in $\Bbb H^2\times \Bbb
R$. The only known examples at that moment were the Scherk minimal
graphs over ideal polygonal domains with an even number of edges, with
boundary values $\pm\infty$ disposed alternately. Morabito and the
authors constructed in~\cite{mr, p} non simply-connected properly
embedded minimal surfaces with finite total curvature and genus
zero. Quite recently, in a joint work with Mart\'\i n and Mazzeo, the
second author~\cite{mmr} has constructed properly embedded minimal surfaces with
finite total curvature and positive genus.

The classification of minimal surfaces of finite total curvature in
$\H^2\times\R$ arises very naturally. The first result of
classification appearing in this theory was that the only complete
minimal surfaces with vanishing total curvature are the vertical
geodesic planes (see \cite[Corollary 5]{hst}). Quite recently,
Hauswirth, Nelli, Sa Earp and Toubiana have proved in~\cite{hnst} that
a complete minimal surface in $\h^2\times\r$ with finite total
curvature and two ends, each one asymptotic to a vertical geodesic
plane, must be one of the horizontal catenoids constructed
in~\cite{mr, p}.  In this paper, we show that the Scherk minimal
graphs over ideal quadrilaterals (i.e. ideal polygonal domains bounded
by four ideal geodesics) are the only complete minimal surfaces of
total curvature $-2\pi$.

It was expected that each end of a minimal surface with finite
curvature in $\h^2\times\r$ were asymptotic to either a vertical
geodesic plane or a Scherk graph over an ideal polygonal domain. We
construct new simply-connected examples, that we call {\it twisted
  Scherk examples}, that highlight this is not the case. They all have
total curvature an integer multiple of $-4\pi$, so we cannot expect a
classification result for Scherk graphs over ideal polygonal domains
bounded by $4k+2$ edges as the only simply-connected complete minimal
surfaces in $\H^2\times\R$ with total curvature $-4k\pi$.

\section{Preliminaires}
\label{sec:prelim}

We consider the Poincar\'e disk model of $\H^2$; i.e.  $\H^2=\{z\in\C\
|\ |z|<1\}$, with the hyperbolic metric
$g_{-1}=\frac{4}{(1-|z|^2)^2}|dz|^2$. We denote by
$\partial_\infty\H^2$ the infinite boundary of $\H^2$
(i.e. $\partial_\infty\H^2=\{z\in\C\ |\ |z|=1\}$) and by $\0$ the
origin of $\H^2$. Also $t$ will denote the coordinate in $\R$.

Let $M$ be a complete orientable minimal surface immersed in
$\H^2\times\R$.  We define the total curvature of $M$ as $C(M)=\int_M
K$, where $K\leq 0$ denotes the Gaussian curvature of $M$.
We say that $M$ has finite total curvature when $|C(M)|<+\infty$.

In this section we summarize the geometric properties of minimal
surfaces in $\H^2\times\R$ with finite total curvature given by
Hauswirth and Rosenberg in~\cite{hr}.

We call the {\it height function} of $M$ the horizontal projection $h:M\to
\R$, and we denote by $F$ the vertical projection of $M$ over
$\H^2$. It is well-known that $h$ is a real harmonic function on $M$
and that $F$ is an harmonic map from $M$ to $\H^2$.  Given a conformal
parameter $w$ on $M$, Sa Earp and Toubiana~\cite{et} proved that
$(h_w)^2=-Q$, where $Q$ is the Hopf differential associated to
$F$. Then the zeroes of $Q$ are of even order and, up to a sign (which
corresponds to a reflection symmetry with respect to $\H^2\times\R$),
\begin{equation}\label{eq:h}
  h=\Re\left(-2i\int\sqrt{Q}\right),
\end{equation}
see equation (3) in~\cite{hr}.

We fix a unit normal vector field $N$ on $M$.  We now state the main
theorem in~\cite{hr}.

\begin{theorem}\cite{hr}\label{th:description}
  Let $M$ be a complete, orientable, minimal surface immersed in
  $\H^2\times\R$ with finite total curvature. Then:
  \begin{enumerate}
  \item $M$ is conformally a closed Riemann surface $\M$ punctured in a
    finite number of points $p_1,\cdots,p_n$, called {\em ends} of
    $\Sigma$.
  \item $Q$ is holomorphic on $M$ and extends meromorphically to its
    ends $p_i$. If we parameterize conformally a neighborhood of $p_i$
    in $M$ by $\Omega=\C\setminus D_0$, where $D_0$ is the open unit
    disk in~$\C$ centered at the origin, then
    \[
    Q(z)= z^{2m_i} (dz)^2,
    \]
    for some integer $m_i\geq -1$.
  \item $N_3=\langle N,\partial_t\rangle$ converges uniformly to zero
    on each end $p_i$.
  \item The total curvature of $M$ is given by
    \begin{equation}\label{eq}
      \int_M K=2\pi\left(2-2g-2n-\sum_{i=1}^n m_i\right).
    \end{equation}
  \end{enumerate}
\end{theorem}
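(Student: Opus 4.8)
The plan is to reproduce, in the product setting, the classical structure theory of Huber and Osserman for minimal surfaces of finite total curvature, with the harmonic map $F\colon M\to\H^2$ playing the role of the Gauss map. Since $K\le 0$ and $|C(M)|=\int_M(-K)<+\infty$, Huber's theorem applies directly and gives part (1): $M$ is conformally a compact Riemann surface $\M$ of some genus $g$ with finitely many points $p_1,\dots,p_n$ removed, each puncture admitting a conformally annular neighborhood that we parametrize by $\Omega=\C\setminus D_0$ with $p_i$ corresponding to $z=\infty$. By completeness, each such end is noncompact and of infinite area.

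For part (2), recall that the product structure of $\H^2\times\R$ makes $h$ a harmonic function and $F$ a harmonic map when $M$ is minimal, and that for a harmonic map from a Riemann surface the $(2,0)$-part $Q$ of the pulled-back metric is a holomorphic quadratic differential; this gives holomorphy of $Q$ on $M$. It remains to control $Q$ at each $p_i$, and here finite total curvature enters. Writing the induced metric on $\Omega$ as $\lambda\,|dz|^2$, the Gauss equation of $\H^2\times\R$ reads $K=-\tfrac12|A|^2-N_3^{\,2}$, while $|\nabla^M h|^2=4|h_z|^2/\lambda$ and $|h_z|^2=|Q|$ from $(h_z)^2=-Q$; combining these with $\int_\Omega(-K)<\infty$ and with completeness rules out an essential singularity of the holomorphic function $Q$ at $z=\infty$ and forces $Q$ to be meromorphic there. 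Single-valuedness of $h=\Re(-2i\int\sqrt Q)$ on the annulus $\Omega$ forces the leading exponent of $Q$ at $z=\infty$ to be even, say $2m_i$ with $m_i\in\Z$, and a biholomorphic change of the annular coordinate normalizes $Q(z)=z^{2m_i}(dz)^2$. In this normal form $h=\Re(-2i\int z^{m_i}\,dz)$ and $|h_z|^2=|z|^{2m_i}$, so $\lambda\ge 2|z|^{2m_i}$; if $m_i\le -2$ a length estimate along a ray to $z=\infty$ would contradict completeness, whence $m_i\ge -1$.

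Part (3) rests on the pointwise identity $N_3^{\,2}=1-|\nabla^M h|^2$, obtained by decomposing the unit Killing field $\partial_t=\nabla^M h+N_3\,N$ along $M$. In the normal form of part (2) this reads $N_3^{\,2}=1-4|z|^{2m_i}/\lambda$, so $N_3\to 0$ at $p_i$ is equivalent to $\lambda\sim 2|z|^{2m_i}$, i.e. to the energy density of $F$ being negligible compared with $|h_z|^2$ near the puncture. This is exactly where $\int_M|K|<+\infty$ is used a second time, through $\int_M|A|^2<\infty$ and $\int_M N_3^{\,2}<\infty$: together with the asymptotic analysis of the harmonic map $F$ near the isolated singularity $p_i$ and elliptic estimates for the Jacobi operator $L=\Delta+|A|^2-(1-N_3^{\,2})$ applied to the bounded Jacobi field $N_3$ (which satisfies $LN_3=0$), these bounds upgrade to the uniform convergence $N_3\to 0$. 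Part (4) is then Gauss--Bonnet: with $Q=z^{2m_i}(dz)^2$ and $N_3\to 0$ the induced metric near $p_i$ is asymptotic to the flat cone metric $|z|^{2m_i}|dz|^2$ of total angle $2\pi(m_i+1)$; applying Gauss--Bonnet to $M$ with a small neighborhood of each end removed and letting these neighborhoods shrink, the contribution of $p_i$ is $-2\pi(m_i+1)$, and with $\chi(M)=2-2g-n$ one obtains $\int_M K=2\pi(2-2g-n)-2\pi\sum_i(m_i+1)=2\pi\bigl(2-2g-2n-\sum_i m_i\bigr)$.

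The main obstacle is the asymptotic analysis at the punctures underlying parts (2) and (3). In $\R^3$ this is clean because the Gauss map extends holomorphically across each end, so $|A|\to 0$ and the end geometry is read off a single holomorphic datum; in $\H^2\times\R$ there is no such Gauss map, and one must instead show that the pair formed by the Hopf differential $Q$ and the hyperbolic Gauss map of $F$ is tame at $p_i$ — in particular that the harmonic map $F$, rather than the immersion itself, does not degenerate wildly there — using the interplay between the energy-type bounds furnished by $\int_M|K|<+\infty$ and the asymptotic theory of harmonic maps into $\H^2$ near a puncture. Once this is in place, the meromorphic normal form of $Q$, the decay of $N_3$, and the curvature formula follow as above.
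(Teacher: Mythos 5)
This theorem is not proved in the paper at all: it is quoted from Hauswirth--Rosenberg~\cite{hr} (the paper's ``proof'' is the citation), so there is no internal argument to compare against. Judged on its own, your sketch has the correct architecture and matches the strategy of~\cite{hr} at the level of ingredients: Huber's theorem for (1), holomorphy of the Hopf differential of the harmonic map $F$, the identity $N_3^2=1-|\nabla^M h|^2$ coming from the Killing field $\partial_t$, and Gauss--Bonnet against a cone metric for (4) are all the right tools. But the steps carrying the real analytic content are asserted rather than proved, and one step is argued with an inequality pointing the wrong way.

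Concretely: (i) the claim that finite total curvature ``rules out an essential singularity of $Q$ at $z=\infty$'' does not follow from what you list. From $|\nabla^M h|^2=4|Q|/\lambda\le 1$ you only get the \emph{lower} bound $\lambda\ge 4|Q|$, and $\int(-K)\,\lambda<\infty$ does not by itself control the growth of the holomorphic function $Q$ on the annulus; in~\cite{hr} this is where the auxiliary function $\omega$ with $N_3=\tanh\omega$, the sinh--Gordon-type equation it satisfies, and an identity of the form $-K\lambda=\Delta_0\log\cosh\omega$ do the work, and some such mechanism must be supplied. (ii) The exclusion of $m_i\le -2$ via ``a length estimate along a ray'' is backwards: $\lambda\ge c|z|^{2m_i}$ bounds lengths from \emph{below}, and for $m_i\le -2$ that lower bound is finite, which contradicts nothing. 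To contradict completeness you need the \emph{upper} bound $\lambda\le C|z|^{2m_i}$ near the puncture, which is equivalent to $N_3$ staying away from $\pm1$ there --- i.e.\ it requires (a weak form of) part~(3) as input, so the logical order of your parts (2) and (3) must be inverted or intertwined. (iii) The uniform decay $N_3\to 0$ is precisely the hard step; ``elliptic estimates\dots upgrade to uniform convergence'' presupposes bounded geometry (in particular a bound on $|A|$ on the end), which is itself part of what has to be proved. (iv) In (4), $C^0$ convergence of $\lambda|dz|^2$ to $|z|^{2m_i}|dz|^2$ is not enough to pass to the limit in Gauss--Bonnet: you need the geodesic curvature integrals over $|z|=R$ to converge, i.e.\ $C^1$ control of the metric. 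You correctly identify the asymptotic analysis at the punctures as ``the main obstacle,'' but that obstacle is essentially the whole theorem; as written, the proposal is a plan rather than a proof, and within this paper the statement should simply be attributed to~\cite{hr}.
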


\begin{remark}\label{rem:Q}
Suppose $p_i$ is an end of $M$ for which $m_i=-1$. If we want to
close periods in equation~\eqref{eq:h}, then we have to choose
$Q(z)= -z^{-2} (dz)^2$, $z\in\Omega$.
\end{remark}

\begin{assertion}\label{lemma}
  In the second item of Theorem~\ref{th:description}, $m_i$ cannot
  equal $-1$.
\end{assertion}

\begin{proof}
  Suppose $M$ (in the setting of Theorem~\ref{th:description}) has an
  end $p_1$ for which $m_1=-1$. We know that a neighborhood
  $\mathcal{E}$ of $p_1$ can be conformally parameterized on
  $\Omega=\{z\in\C\ |\ |z|\geq 1\}$, where $Q(z)=-z^{-2}dz^2$ (see
  Remark~\ref{rem:Q}). From~\eqref{eq:h} we then get $h(z)=2
  \Re\left(\int_M \frac{dz}z\right)=2 \ln|z|$. Therefore,
  $\mathcal{E}$ is a vertical annulus whose intersection with each
  horizontal slice $\H^2\times\{t\}$, $t\geq 0$, is a compact curve.

  The boundary of $\mathcal{E}$ (which corresponds to $\{|z|=1\}$)
  consists of a horizontal compact curve~$\Gamma$ at height
  zero. Consider $R>0$ big enough so that the disc $D\subset\H^2$ of
  radius $R$ centered at the origin contains $\Gamma$ in its interior.
  And let $\mathcal{C}$ be the complete vertical rotational catenoid
  constructed by Nelli and Rosenberg in~\cite{nr} whose neck is
  $\partial D$. Since $\mathcal{E}$ intersects each horizontal slice
  in a compact curve, we deduce using the Maximum Principle with
  vertically translated copies of $\mathcal{C}$ that $\mathcal{E}$
  must be contained in $D\times\R$. But this is not possible: If we
  translate $\mathcal{C}$ vertically up a distance $\pi$, we reach a
  contradiction by applying the Maximum Principle with the family of
  shrunk catenoids going from $\mathcal{C}$ to the 2-sheeted covering
  of the punctured slice $(\H^2-\{\0\})\times\{\pi\}$.
\end{proof}

We finish this section by describing the asymptotic behavior of a
complete, orientable, minimal surface immersed in $\H^2\times\R$ with
finite total curvature.

\begin{lemma}\cite{hr}\label{lem:description}
  Let $M$ be a minimal surface in the hypothesis of
  Theorem~\ref{th:description}, and $p_i$ an end of~$M$. If $m_i\geq
  0$ is the integer associated to $p_i$, as defined in
  Theorem~\ref{th:description}, then $p_i$ corresponds to $m_i+1$
  geodesics
  $\gamma_1,\ldots,\gamma_{m_i+1}\subset\H^2\times\{+\infty\}$,
  $m_i+1$ geodesics
  $\Gamma_1,\ldots,\Gamma_{m_i+1}\subset\H^2\times\{-\infty\}$, and
  $2(m_i+1)$ vertical straight lines (possibly some of them coincide) in $\partial_\infty\H^2\times\R$,
  each one joining an endpoint of some $\gamma_j$ to an endpoint of
  some $\Gamma_j$.
%
\end{lemma}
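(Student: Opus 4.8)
The plan is to understand the asymptotic boundary behavior of an end $p_i$ by combining the explicit formula~\eqref{eq:h} for the height function with the harmonic map structure of the vertical projection $F$. Near $p_i$ we have the conformal parameter $z$ on $\Omega=\C\setminus D_0$ with $Q(z)=z^{2m_i}(dz)^2$ and $m_i\geq 0$ by Assertion~\ref{lemma}. The first step is to compute $h$ from~\eqref{eq:h}: since $\sqrt{Q}=z^{m_i}\,dz$, we get $h=\Re\left(-2i\,\frac{z^{m_i+1}}{m_i+1}\right)$ up to an additive constant, so the level sets $\{h=\text{const}\}$ and the steepest-descent curves of $h$ in the $z$-plane are governed by the argument of $z^{m_i+1}$. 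The horizontal slices $M\cap(\H^2\times\{t\})$, for $|t|$ large, correspond in $\Omega$ to $2(m_i+1)$ arcs going off to $z=\infty$ in the $2(m_i+1)$ sectors where $\Re(-i z^{m_i+1})$ has the right sign, alternately $+\infty$ and $-\infty$; and the curves where $|h|\to\infty$ divide $\Omega$ into $2(m_i+1)$ sectors, the odd-numbered ones sent toward $t=+\infty$ and the even-numbered ones toward $t=-\infty$.

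The second step is to control the horizontal projection $F$ on each such sector. Here I would invoke the asymptotic analysis of Hauswirth--Rosenberg for the harmonic map $F:M\to\H^2$ with Hopf differential $Q$: on each of the $2(m_i+1)$ sectors, after a suitable conformal change of coordinate straightening $Q$ to $(d\zeta)^2$ (valid since $Q$ has no zeros on the end), the map $F$ behaves like the harmonic map associated with a geodesic, so that $F$ converges, as $z\to\infty$ within the sector, to a point on $\partial_\infty\H^2$; moreover the image curves $F(M\cap\{h=t\})$ converge, as $t\to\pm\infty$, to complete geodesics of $\H^2$. Tracking the $m_i+1$ sectors mapped to $t=+\infty$ gives $m_i+1$ limit geodesics $\gamma_1,\dots,\gamma_{m_i+1}\subset\H^2\times\{+\infty\}$, and the $m_i+1$ sectors mapped to $t=-\infty$ give $\Gamma_1,\dots,\Gamma_{m_i+1}\subset\H^2\times\{-\infty\}$. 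Between two consecutive sectors, i.e.\ across each of the $2(m_i+1)$ divides, the end becomes asymptotically vertical: $F$ converges to a single point $q\in\partial_\infty\H^2$ while $h$ runs from $-\infty$ to $+\infty$, so the end is asymptotic there to a vertical line $\{q\}\times\R$. Since each such divide separates a $+\infty$ sector from a $-\infty$ sector, the corresponding vertical line joins an endpoint of some $\gamma_j$ to an endpoint of some $\Gamma_k$, accounting for all $2(m_i+1)$ vertical lines; the parenthetical ``possibly some of them coincide'' covers the degenerate configurations where distinct divides have the same limit point.

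The main obstacle I anticipate is making the convergence of $F$ on each sector rigorous and uniform up to $z=\infty$: one needs enough control on the harmonic map $F$ — not just that $N_3\to 0$ (item (3) of Theorem~\ref{th:description}), but genuine decay of the horizontal part of the metric and of $\nabla F$ relative to the flat $|Q|$-metric — to conclude that the image of each horizontal arc converges to a geodesic rather than merely accumulating on $\partial_\infty\H^2$ in some uncontrolled way. This is exactly the delicate part of the Hauswirth--Rosenberg asymptotic description, and I would quote their estimates (the expansion of the induced metric and of $\langle N,\partial_t\rangle$ near an end) as a black box, using them to pass to the limit. A secondary, more bookkeeping, difficulty is correctly matching endpoints: one must check that consecutive sectors around $z=\infty$ really do share a boundary divide and that the limit point of $F$ along that divide coincides with the appropriate endpoint of the adjacent limit geodesics $\gamma_j$ and $\Gamma_k$ — this follows by a continuity/connectedness argument on the circle of directions at $z=\infty$, tracking how $\arg z$ increases through the $2(m_i+1)$ sectors.
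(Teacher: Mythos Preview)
The paper does not prove this lemma: it is stated with the citation \cite{hr} and no proof environment follows, so it is being quoted from Hauswirth--Rosenberg as a known result. There is therefore no ``paper's own proof'' to compare your proposal against.

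That said, your sketch is a reasonable outline of how the argument in \cite{hr} actually goes: one integrates $\sqrt{Q}$ to get $h(z)=\frac{2}{m_i+1}\Im(z^{m_i+1})$ (up to sign and additive constant), which cuts a neighborhood of $z=\infty$ into $2(m_i+1)$ sectors sent alternately to $t=+\infty$ and $t=-\infty$, and then one uses the harmonic-map asymptotics (the fact that $N_3\to 0$ and the induced metric is asymptotic to the flat $|Q|$-metric) to see that on each sector $F$ limits to a complete geodesic of $\H^2$, while along each separating ray $F$ converges to a point of $\partial_\infty\H^2$. You are right that the delicate step is the convergence of $F$ to an honest geodesic rather than arbitrary accumulation on $\partial_\infty\H^2$; this is precisely the content of the end analysis in \cite{hr}, and since the present paper treats the lemma as a citation, invoking those estimates as a black box is exactly what is expected here. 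Your bookkeeping concern about matching endpoints is minor and is handled, as you say, by continuity in $\arg z$.
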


\section{Minimal examples with finite total curvature}
\label{sec:examples}

Given any two points $p,q\in\H^2\cup\partial_\infty\H^2$, we will
denote by $\overline{pq}$ the geodesic arc joining $p,q$.

We consider an even number of different points
$p_1,\cdots,p_{2k}\in\partial_\infty\H^2$ (cyclically ordered), with
$k\geq 2$, and we call $A_i=\overline{p_{2i-1} p_{2i}}$,
$B_i=\overline{p_{2i} p_{2i+1}}$, for any $1\leq i\leq k$, where we
consider the cyclic notation $p_{2k+1}=p_1$. Let $\Omega$ be the ideal
polygonal domain bounded by $A_1,B_1,\cdots,A_k,B_k$. We call {\it
  Scherk minimal graph} over $\Omega$ to a minimal graph over $\Omega$
with boundary values $+\infty$ over the $A_i$ edges and $-\infty$ over
the $B_i$ edges (in~\cite{cr, nr} it is proved that it exists and it
is unique up to a vertical translation). In~\cite{cr,hr} it is proved
that such a graph has total curvature $2\pi(1-k)$.  Scherk graphs over
ideal polygonal domains, together with the vertical geodesic planes,
where the first known examples of minimal surfaces with finite total
curvature.

In~\cite{mr, p} other non-simply-connected examples where presented,
called {\it minimal $k$-noids}. We briefly explain their construction:
Consider an even number of points $p_1,\cdots,p_{2k}$ (cyclically
ordered) such that $p_{2i-1}\in\H^2$ and
$p_{2i}\in\partial_\infty\H^2$. We call $A_i=\overline{p_{2i-1}
  p_{2i}}$ and $B_i=\overline{p_{2i} p_{2i+1}}$. Consider the minimal
graph $\Sigma$ over the polygonal domain bounded by
$A_1,B_1,\cdots,A_k,B_k$ with boundary values $+\infty$ over the $A_i$
edges and $-\infty$ over the $B_i$ edges (it exists and is unique up
to a vertical translation, by~\cite{cr,mrr}), which has total
curvature $2\pi(1-k)$ (see~\cite{cr}). The conjugate minimal surface
$\Sigma^*$ of $\Sigma$ is a minimal graph contained in
$\H^2\times\{t\geq 0\}$, whose boundary consists of $k$ geodesic
curvature lines in $\H^2\times\{0\}$. (The conjugation for minimal
surfaces in $\H^2\times\R$ was introduced by Daniel~\cite{d} and by
Hauswirth, Sa~Earp and Toubiana~\cite{hst}.) If we reflect $\Sigma^*$
with respect to $\H^2\times\{0\}$, we get a properly embedded minimal
surface of genus zero, $k$ ends asymptotic to vertical geodesic planes
and total curvature $4\pi(1-k)$. For $k=2$, the obtained examples are
usually called {\em horizontal catenoids}, and have been recently
classified by Hauswirth, Nelli, Sa Earp and Toubiana as the only
complete minimal surfaces in $\h^2\times\r$ with finite total
curvature and two ends, each one asymptotic to a vertical geodesic
plane.

Using a gluing method, the second author has recently constructed in a
joint work with Mart\'\i n and Mazzeo a wide range of properly
embedded minimal surfaces with finite total curvature and finite
topology (with possibly positive genus).

We wondered if Scherk minimal graphs were, together with the vertical
geodesic planes, the only complete, embedded, simply-connected
examples of finite total curvature. In this section we explain the
simple construction of other different complete, embedded,
simply-connected examples, that we will call {\it twisted Scherk examples}.

\subsection{
  Twisted Scherk examples}

Let us first construct an example with total curvature $-4\pi$. Let
$p_1,p_2$ be two points in $\partial_\infty\H^2$. Up to an isometry of
$\H^2$, we can assume $p_{1}=1$ and $p_{2}=e^{i\theta}$, for some
fixed $\theta\in(0,\pi/2]$ (see Figure~\ref{k1}). We call
$A_1=\overline{\0 p_{1}}$, $B_{1}=\overline{p_{1} p_{2}}$ and
$C_{1}=\overline{\0 p_{2}}$. Let $\Delta$ be the geodesic triangle
bounded by $A_{1}\cup B_{1}\cup C_{1}$. By the triangle inequality at
infinity (see~\cite[Lemma 3]{cr}), we get that $\Delta$ satisfies the
Jenkins-Serrin condition for the existence of a minimal graph~$u$ over
$\Delta$ with boundary values $+\infty$ on $A_{1}$, $-\infty$ on
$B_{1}$ and $0$ on $C_{1}$ (see~\cite[Theorem 3]{cr} and~\cite[Theorem
3.3]{mrr}).

Now let us see that the graph surface $\Sigma(u)$ of $u$ has finite
total curvature: For any positive integer $n$, we denote $r=1-1/(n+1)$
and $p_{1,n}=r$, $p_{2,n}=re^{i\theta}$. By Theorem 3 in~\cite{nr},
there exists a minimal graph $u_{r}(n)$ over the geodesic triangle of
vertices $\0,p_{1,n},p_{2,n}$ taking boundary values $+n$ on
$\overline{\0 p_{1,n}}$, $-n$ on $\overline{p_{1,n} p_{2,n}}$ and $0$
on $\overline{\0 p_{2,n}}$. By the Gauss-Bonnet formula, the graph
surface of $u_{r}(n)$ has total curvature $\pi$.  Since $u_{r}(n)$
converges uniformly on compact sets of $\Delta$ to $u$ as
$n\rightarrow\infty$, the total curvature of $\Sigma(u)$ is at most
$\pi$, and then finite.

By rotating $\Sigma(u)$ an angle $\pi$ about the horizontal geodesic
ray $\overline{\0p_{2}}$ contained in its boundary, we obtain a
minimal graph whose boundary consists of the vertical geodesic
$\{\0\}\times\R$. We extend such a graph by rotation of angle $\pi$
about its boundary, and we get a properly embedded simply-connected
minimal surface $\Sigma_{1}$.

\begin{figure}
\begin{center}
\includegraphics[height=5cm, width=11cm]{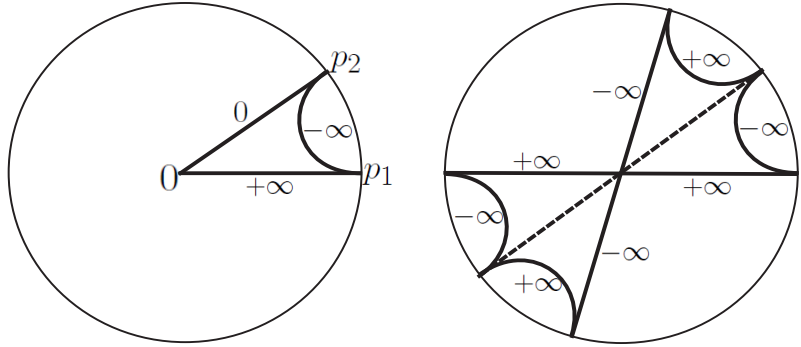}
\end{center}
\caption{Left: The minimal graph over the triangle region
  with these prescrived values is the fundamental piece of a twisted
  Scherk example $\Sigma_1$ with total curvature $-4\pi$. Right:
  Vertical projection of $\Sigma_1$.  }
\label{k1}
\end{figure}

Since $\Sigma_1$ consists of four copies of $\Sigma(u)$, then it has
finite total curvature. Then equation~\eqref{eq} applies. In our case,
$g=0$, $n=1$ and $m_1=2$ ($m_1=2$ follows from the fact that the
intersection of $M$ with a horizontal slice $\H^2\times\{t\}$, for
$t>0$ large enough, consists of three divergent curves, see
Figure~\ref{k1}). Thus $\int_{\Sigma_1}K=-4\pi$.

\medskip

Now, let us consider $k\geq 2$. Let $\Omega$ be a polygonal domain
whose vertices are $\0$ and $2k-1$ different ideal points
$p_1,\cdots,p_{2k-1}\in\partial_\infty\H^2$. Assume that $\Omega$
satisfies the Jenkins-Serrin condition of Theorem 3 in~\cite{cr} or
Theorem 3.3 in~\cite{mrr}. The example below proves that there exist
such domains. We call $\Sigma$ the minimal graph over $\Omega$ with
boundary values $+\infty$ on $\overline{\0 p_1}$ and on
$\overline{p_{2i} p_{2i+1}}$, for $1\leq i\leq k-1$; and $-\infty$ on
$\overline{p_{2i-1} p_{2i}}$, for $1\leq i\leq k-1$, and zero on
$\overline{\0 p_{2k-1}}$. By rotating $\Sigma$ an angle $\pi$ about
the vertical geodesic line $\{\0\}\times\R$ in its boundary, we obtain
a properly embedded simply-connected minimal surface
$\Sigma_{k}$. Arguing similarly as for $\Sigma_1$, we can prove that
$\int_{\Sigma_k}K=-4k\pi$. Then we have proved the following theorem.

\begin{theorem}
  For any integer $k\geq1$, there exists a properly embedded
  simply-connected minimal surface $\Sigma_k$ of finite total
  curvature $-4k\pi$ which is not a minimal (vertical) graph.
\end{theorem}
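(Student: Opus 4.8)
The plan is to give an explicit construction, generalizing the $k=1$ case that has just been carried out in detail, and then to verify the three claimed properties (proper embeddedness, simple connectivity, and total curvature $-4k\pi$) in turn. For $k=1$ the surface $\Sigma_1$ has already been built and shown to have total curvature $-4\pi$ by a direct application of equation~\eqref{eq} with $g=0$, $n=1$, $m_1=2$; so the only genuinely new work is the case $k\geq 2$, where one must first exhibit an admissible Jenkins--Serrin domain $\Omega$ with vertices $\0$ and $2k-1$ ideal points, and then glue four copies of the graph $\Sigma$ over $\Omega$ together by rotations.

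First I would pin down the domain $\Omega$. I would choose the ideal points $p_1,\ldots,p_{2k-1}\in\partial_\infty\H^2$ so that $\Omega$ is the ideal polygon with vertices $\0,p_1,\ldots,p_{2k-1}$ (in cyclic order), and I would assign boundary data $+\infty$ on $\overline{\0 p_1}$ and on each $\overline{p_{2i}p_{2i+1}}$ ($1\le i\le k-1$), $-\infty$ on each $\overline{p_{2i-1}p_{2i}}$ ($1\le i\le k-1$), and $0$ on $\overline{\0 p_{2k-1}}$. To see that such a configuration can be chosen to satisfy the Jenkins--Serrin condition of~\cite[Theorem 3]{cr} or~\cite[Theorem 3.3]{mrr}, I would place the ideal vertices nearly equally spaced and appeal to the triangle inequality at infinity (\cite[Lemma 3]{cr}), exactly as in the $k=1$ argument; the example in the paper text already furnishes concrete such domains. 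Once $\Sigma$ exists as a minimal graph over $\Omega$, I would rotate it by angle $\pi$ about the vertical geodesic $\{\0\}\times\R$, which lies in its boundary (the side $\overline{\0 p_{2k-1}}$ carries boundary value $0$, so by Schwarz reflection the rotated surface glues smoothly to $\Sigma$). The result $\Sigma_k$ is a properly embedded surface; it is simply connected because it is made of two graph pieces glued along a single geodesic arc, so it is homeomorphic to a disk.

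Next I would compute the total curvature. The finiteness follows exactly as for $\Sigma_1$: approximate $\Omega$ by truncated polygons with finite boundary values, apply Gauss--Bonnet to the approximating graphs to bound their total curvature uniformly, and pass to the limit using uniform convergence on compact subsets. Once finiteness is known, Theorem~\ref{th:description} applies, so $\int_{\Sigma_k}K=2\pi(2-2g-2n-\sum m_i)$. Here $\Sigma_k$ has genus $g=0$ and a single end $n=1$ (the reflection identifies the one noncompact direction of $\Sigma$ with itself), and one reads off $m_1$ from the number of divergent curves in the intersection with a high horizontal slice $\H^2\times\{t\}$, $t\gg 0$: each side with boundary value $+\infty$ contributes one such curve, and there are $k$ of them ($\overline{\0p_1}$ together with the $k-1$ sides $\overline{p_{2i}p_{2i+1}}$), so by Lemma~\ref{lem:description} we get $m_1+1=k$ after accounting for the reflection, hence $m_1=2k$. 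Plugging in gives $\int_{\Sigma_k}K=2\pi(2-0-2-2k)=-4k\pi$.

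The main obstacle I expect is the careful bookkeeping of the end data, i.e. correctly identifying $n$ and $m_1$ for $\Sigma_k$ when $k\ge 2$; one must be sure the rotation about $\{\0\}\times\R$ produces a single end rather than several, and must count the divergent slice curves consistently with Lemma~\ref{lem:description} (which forces the count to be odd on each ``$+\infty$'' or ``$-\infty$'' side of the picture). A secondary point requiring care is the existence of the Jenkins--Serrin domain $\Omega$ for all $k\ge 2$: one must check that the required inequalities on the (ideal) edge lengths can be simultaneously satisfied, and it is here that an explicit symmetric example is the cleanest route. Finally, I would note that each $\Sigma_k$ fails to be a vertical graph since it contains the vertical geodesic $\{\0\}\times\R$ and extends to both sides of it, which completes the proof.
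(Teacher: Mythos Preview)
Your plan is the paper's plan: exhibit a Jenkins--Serrin graph $\Sigma$ over a polygon with one finite vertex $\0$, extend it by rotations of angle $\pi$ about geodesics in its boundary, show finiteness of the total curvature via a Gauss--Bonnet/compact-exhaustion argument, and then read off $g=0$, $n=1$, $m_1=2k$ in~\eqref{eq}. Two concrete points need repair, though.

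First, the reflection you perform is not the one you justify. With the data you list, the edge $\overline{\0\,p_{2k-1}}$ carries the value $0$, so $\partial\Sigma$ contains the \emph{horizontal} geodesic ray $\overline{\0\,p_{2k-1}}\times\{0\}$ together with only the vertical \emph{half}-line $\{\0\}\times[0,\infty)$ (coming from the $+\infty$ edge $\overline{\0\,p_1}$); the full line $\{\0\}\times\R$ is \emph{not} in $\partial\Sigma$, and a single rotation about it does not produce a complete surface---the $0$-edge and its image survive as boundary. Your parenthetical ``the side $\overline{\0\,p_{2k-1}}$ carries boundary value $0$, so by Schwarz reflection the rotated surface glues smoothly'' is precisely the justification for rotating about the \emph{horizontal} ray $\overline{\0\,p_{2k-1}}\times\{0\}$, not about the vertical line. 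What the paper does explicitly for $k=1$, and what ``arguing similarly as for $\Sigma_1$'' is meant to convey, is to perform \emph{both} rotations: first about the horizontal $0$-edge (this yields a graph whose boundary is exactly $\{\0\}\times\R$), and then about that vertical line. This is the ``four copies'' you announce at the outset; the later ``two graph pieces glued along a single geodesic arc'' contradicts it and must be dropped.

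Second, the sentence ``$m_1+1=k$ after accounting for the reflection, hence $m_1=2k$'' is arithmetically broken. The count of divergent curves in $\Sigma_k\cap(\H^2\times\{t\})$ for $t\gg 0$ has to be carried out on the full four-copy surface, remembering that the two $+\infty$ half-edges emanating from $\0$ merge, after the vertical rotation, into a single complete geodesic in $\H^2\times\{+\infty\}$. Done consistently with the $k=1$ model (where the paper gets three such curves and $m_1=2$), this gives $m_1+1=2k+1$, hence $m_1=2k$ and $\int_{\Sigma_k}K=2\pi(2-0-2-2k)=-4k\pi$. Your final remark that $\Sigma_k$ is not a vertical graph because it contains $\{\0\}\times\R$ is correct and is indeed the point of the construction.
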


Now let us construct a polygonal domain $\Omega$ in the above
setting. For any $\theta\in(0,\frac{\pi}{2k})$, let $\Omega_\theta$ be
the polygonal domain with vertices $\0$, $\widetilde p_{1}=1$, and
\[
p_{n}=e^{i(n-1)\theta},\quad 2\leq n\leq k+1.
\]
We mark by $+\infty$ the edge $\overline{\0, \widetilde p_1}$ and
those of the form $\overline{p_{2i} p_{2i+1}}$; by $-\infty$ the edges
of the form $\overline{p_{2i-1} p_{2i}}$; and by $0$ the edge
$\overline{\0 p_{k+1}}$. It is clear that $\Omega_\theta$ does not
satisfy the Jenkins-Serrin condition (see Theorem 3 in~\cite{cr} or
Theorem 3.3 in~\cite{mrr}), as we can consider the inscribed polygonal
domain $\mathcal{P}\subset\Omega$ with vertices $\0,\widetilde
p_1,p_2,p_3$ and any choice of disjoints horocycles $H_1,H_2,H_3$ at
$\widetilde p_1,p_2,p_3$ respectively, for which
$
\mbox{dist}_{\H^2}(\0,H_1)+ \mbox{dist}_{\H^2}(H_2,H_3)=
\mbox{dist}_{\H^2}(\0,H_3)+
\mbox{dist}_{\H^2}(H_1,H_2)
$.

To solve this problem, we consider a small perturbation of $\widetilde
p_1$: Let $\Omega_{\theta,\beta}$ be the polygonal domain with
vertices $p_1=e^{-i\beta}$, for $\beta\in(0,\frac{\pi}2-k\theta]$
small, and $p_n$ defined as above, for $2\leq n\leq k+1$. This domain
$\Omega_{\theta,\beta}$ satisfies the Jenkins-Serrin condition if we
label by $+\infty$ the edge $\overline{\0, p_1}$ and those of the form
$\overline{p_{2i} p_{2i+1}}$; by $-\infty$ the edges of the form
$\overline{p_{2i-1} p_{2i}}$; and by $0$ the edge $\overline{\0
  p_{k+1}}$.

Let $R$ be the reflection with respect to the geodesic containing
$\overline{\0 p_{k+1}}$. Then $\Omega= \Omega_{\theta,\beta}\cup
R(\Omega_{\theta,\beta})$ is in the desired conditions. See
Figure~\ref{k2}. 

\begin{figure}
\begin{center}
\includegraphics[height=5cm, width=11cm]{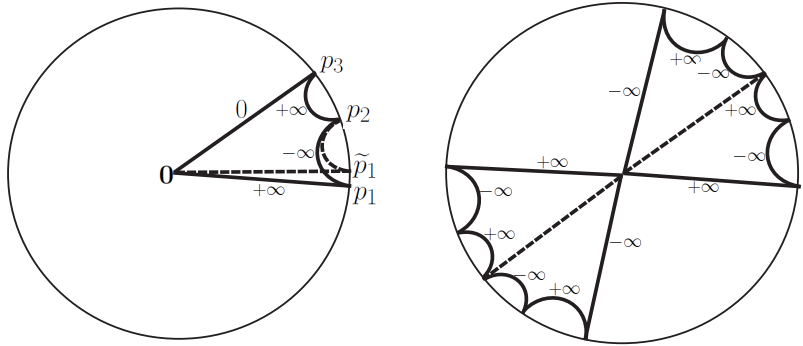}
\end{center}
\caption{Left: The fundamental piece of a twisted
  Scherk example $\Sigma_2$ with total curvature $-8\pi$. Right:
  Vertical projection of $\Sigma_2$.  }
\label{k2}
\end{figure}

%
%
%
%

\section{Uniqueness of Scherk minimal graphs}

\begin{theorem}
  If $M$ is a complete minimal surface of total curvature $-2\pi$ in
  $\H^2\times\R$, then $M$ is the Scherk minimal graph over an ideal
  quadrilateral.
\end{theorem}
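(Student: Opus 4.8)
The plan is to start from the structure theorem (Theorem~\ref{th:description}) and the asymptotic description (Lemma~\ref{lem:description}), apply them to the total curvature constraint $C(M)=-2\pi$, and show the combinatorics force $M$ to be a Scherk graph over an ideal quadrilateral. By Theorem~\ref{th:description}(4), $-2\pi=2\pi(2-2g-2n-\sum_i m_i)$, so $2g+2n+\sum_{i=1}^n m_i=3$. Assertion~\ref{lemma} tells us $m_i\geq 0$ for every end, so all terms on the left are non-negative integers; since $2g$ and $2n$ are even and their sum plus $\sum m_i$ is the odd number $3$, we must have $\sum_i m_i$ odd. The only possibility is $g=0$, $n=1$, $m_1=1$: indeed $n\geq 1$ forces $2n\geq 2$, so $2n=2$ and $2g+\sum m_i=1$, whence $g=0$ and $\sum m_i=m_1=1$. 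Thus $M$ has genus zero, a single end, and Hopf differential $Q=z^2(dz)^2$ near that end.

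Next I would use Lemma~\ref{lem:description} with $m_1=1$: the single end of $M$ gives rise to $m_1+1=2$ geodesics $\gamma_1,\gamma_2$ in $\H^2\times\{+\infty\}$, two geodesics $\Gamma_1,\Gamma_2$ in $\H^2\times\{-\infty\}$, and $2(m_1+1)=4$ vertical straight lines in $\partial_\infty\H^2\times\R$, each joining an endpoint of some $\gamma_j$ to an endpoint of some $\Gamma_j$. So the asymptotic boundary of $M$ consists of a total of four ideal geodesics at the two levels $\pm\infty$ together with four vertical segments connecting their endpoints. I would then argue that this asymptotic picture — two $+\infty$ geodesics alternating with two $-\infty$ geodesics, whose eight endpoints project to four points on $\partial_\infty\H^2$ (each vertical line projecting to one point, and the $\gamma_j,\Gamma_j$ sharing endpoints there) — is precisely the asymptotic boundary of a Scherk graph over an ideal quadrilateral with alternating boundary values $\pm\infty$. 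Concretely, the four vertical lines live over four ideal points $q_1,q_2,q_3,q_4$; the geodesics $\gamma_1,\gamma_2$ at level $+\infty$ join $q_1q_2$ and $q_3q_4$ (say), and $\Gamma_1,\Gamma_2$ at level $-\infty$ join $q_2q_3$ and $q_4q_1$, which is the ideal quadrilateral with the Scherk labeling.

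The final step is to upgrade this match of asymptotic boundaries to an actual identification of $M$ with the Scherk graph $\Sigma$ over that quadrilateral. Here I expect to invoke the uniqueness part of the Jenkins--Serrin-type result (\cite{cr,nr}) together with a maximum-principle / asymptotic-uniqueness argument: one shows $M$ is itself a vertical graph — using that $N_3=\langle N,\partial_t\rangle\to 0$ on the end (Theorem~\ref{th:description}(3)) so $M$ has no vertical points near infinity, combined with an analysis of the height function $h=\Re(-2i\int\sqrt Q)$, which near the end behaves like $\Re(-2i\int z\,dz)=\Re(-iz^2)$ and hence has exactly the three divergent level-set branches and the two $\pm\infty$ asymptotic behaviors expected of the Scherk graph — and then applies uniqueness of the minimal graph with prescribed ideal boundary data to conclude $M=\Sigma$. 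The main obstacle, I expect, is this last step: ruling out a priori that $M$ could fail to be a graph globally (e.g., be multiply sheeted over part of the quadrilateral or have interior vertical points), and then pinning down that the limiting boundary values are exactly $+\infty$ on two opposite sides and $-\infty$ on the other two, so that the classical uniqueness theorem applies verbatim. This will require carefully combining the local model of $h$ and $Q$ at the end with a global Alexandrov-reflection or maximum-principle argument against vertical Scherk graphs.
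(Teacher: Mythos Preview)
Your reduction to $g=0$, $n=1$, $m_1=1$ is correct and matches the paper. The gaps appear right after that, where you write that the asymptotic picture ``is precisely the asymptotic boundary of a Scherk graph over an ideal quadrilateral.'' This is exactly what has to be \emph{proved}, and it is not automatic from Lemma~\ref{lem:description}. Two concrete issues:

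\textbf{(a) The four ideal points need not be distinct or cyclically ordered.} Lemma~\ref{lem:description} allows some of the four vertical lines in $\partial_\infty\H^2\times\R$ to coincide, so a priori one could have, say, $q_1=q_3$, and then the asymptotic boundary projects to an ideal triangle rather than a quadrilateral. Even if the four points are distinct, nothing in the lemma forces the two $+\infty$ geodesics and the two $-\infty$ geodesics to \emph{alternate} around $\partial_\infty\H^2$; your ``(say)'' hides a real obstruction. The paper handles distinctness by a maximum-principle argument with vertical geodesic planes (if $p_1=p_3$, one traps $\pi(M)$ in a region bounded by a concave curve and derives a contradiction from the behavior of the normal along the asymptotic boundary). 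For the cyclic ordering the paper uses the flux formula for the Killing field $\partial_t$ on $M\cap C_{r,T}$ and passes to the limit to obtain
\[
|\overline{p_1p_2}|+|\overline{p_3p_4}|=|\overline{p_2p_3}|+|\overline{p_4p_1}|,
\]
which together with the triangle inequality at infinity (\cite[Lemma~3]{cr}) forces the alternating configuration. Your proposal contains no mechanism for either step.

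\textbf{(b) The upgrade to a global graph is where the content lies, and your sketch does not supply it.} Knowing $N_3\to 0$ at the end only rules out vertical tangent planes near infinity; it says nothing about interior points where $N_3=0$. The local model of $h$ via $Q=z^2(dz)^2$ describes the end, not the whole surface, so it cannot by itself exclude multiple sheets over the interior of $\Omega$. The paper's argument is a concrete nodal-set analysis: if $N_3(P)=0$ at some $P\in M$, the tangent vertical plane $\Gamma\times\R$ meets $M$ in at least four branches through $P$; since $M$ is simply connected, the maximum principle with vertical planes forbids compact cycles in $M\cap(\Gamma\times\R)$, forcing $\Gamma$ to have an ideal vertex of $\Omega$ as an endpoint; a small parallel translate $\Gamma_\varepsilon\times\R$ then meets $M$ in at least two curves, but only one branch can escape to each of the two adjacent asymptotic edges, producing a forbidden compact loop. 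Hence $N_3\neq 0$ everywhere, vertical lines meet $M$ transversally, and a counting argument near $\partial\Omega$ gives exactly one intersection point, so $M$ is a graph. Your reference to ``Alexandrov reflection or maximum principle against vertical Scherk graphs'' does not do this job, and the Jenkins--Serrin uniqueness you cite only applies \emph{after} you know $M$ is a graph.
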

\begin{proof}
  Since the total curvature of $M$ is $-2\pi$, we have by equation
  (\ref{eq}) in Theorem \ref{th:description} that
  \[
  -2\pi=2\pi\left(2-2g-2n-\sum_{i=1}^n m_i\right).
  \]
  We already know that $m_i \geq 0$, by Assertion~\ref{lemma}. And
  $n\geq 1$, since a complete minimal surface in $\H^2\times\R$ cannot
  be compact.  So the only possibility is $g=0$, $n=1$ (hence the
  complete minimal surface $M$ is simply-connected) and $m_1=1$.

  As $m_1=1$, we know by Lemma~\ref{lem:description} that there are
  four points $p_{1},p_2,p_3,p_{4}\in\partial_{\infty}\H^2$, with
  $p_i\neq p_{i+1}$ for any $i$, such that the end of $M$ corresponds
  to
  \[
  (\overline{p_{1}p_{2}}\times\{+\infty\})
  \cup(\overline{p_{2}p_{3}}\times\{-\infty\})
  \cup(\overline{p_{3}p_{4}}\times\{+\infty\})
  \cup(\overline{p_{4}p_{1}}\times\{-\infty\}),
  \]
  together with the complete vertical  geodesics $\{p_i\}\times\R$ in
  the ideal cylinder $\partial_\infty\H^2\times\R$ joining their
  endpoints.

  Let us now prove that the four points $p_i$ are all different. By
  the maximum principle using vertical geodesic planes, we know that
  at least three of them are different as $M$ cannot be a vertical
  plane. Suppose $p_1=p_3$ (the case $p_2=p_4$ follows
  similarly). Also using the maximum principle with vertical geodesic
  planes, we get that the vertical projection $\pi(M)$ of $M$ is
  contained in the ideal geodesic triangle of vertices
  $p_1,p_2,p_4$. Even more, $\pi(M)$ is contained in a domain ${\cal
    T}\subset\H^2$ bounded by $\overline{p_{1}p_{2}}$,
  $\overline{p_{1}p_{4}}$ and a strictly concave (with respect to
  ${\cal T}$) curve $\alpha$. We observe that the points in $M$
  projecting onto $\alpha$ have horizontal normal vector.  Suppose
  that the vertical projection of the limit normal vector of $M$ (that
  we also call $N$) along $\overline{p_{1}p_{2}}\times\{+\infty\}$
  points to ${\cal T}$. We observe that the horizontal curves in $M$
  with endpoint in $\{p_2\}\times\R$ arrive orthogonally to
  $\partial_\infty\H^2\times\R$. In particular, $N$ is constant along
  the vertical asymptotic line $\{p_2\}\times\R$. On one hand that
  implies, looking at the behavior of $N$ along the asymptotic
  boundary of $M$ (corresponding to the end) that the vertical
  projection of $N$ along $\overline{p_{1}p_{2}}\times\{-\infty\}$
  also points to ${\cal T}$, and its projection along
  $\overline{p_{1}p_{4}}\times\{\pm\infty\}$ goes out from ${\cal
    T}$. On the other hand, if we follow the projection of $N$ along
  $\alpha$, we obtain that it points to ${\cal T}$ along
  $\overline{p_{1}p_{4}}\times\{\pm\infty\}$, a contradiction.

  We now claim that $p_1,p_2,p_3,p_4$ are cyclically ordered. We
  define the solid cylinder $C_{r,T}=\{(z,t): |z|\leq r, |t|\leq T\}$,
  for $r<1$ close to one and $T$ large, and consider $M_{r,T}=M\cap
  C_{r,T}$, which is a compact minimal surface bounded by two
  horizontal compact curves 
  contained in $\{t=T\}$ close to $\overline{p_{1}p_2}\times\{T\}$ and
  $\overline{p_3p_4}\times\{T\}$, two curves 
  on $\{t=-T\}$ close to $\overline{p_2p_{3}}\times\{-T\}$ and
  $\overline{p_4p_1}\times\{-T\}$, and four curves on $\{|z|=r\}$
  close to vertical lines. By the flux formula with respect to the
  Killing vector field $\partial_t$ (see~\cite[Proposition 3]{hlr}),
  we have
  \begin{equation}\label{equation}
    \int_{\partial M_{r,T}}\langle \nu,\partial_t\rangle=0,
  \end{equation}
  where $\nu$ is the outward-pointing unit conormal to $M_{r,T}$ along
  $\partial M_{r,T}$. We get from~\eqref{equation}, taking limits as
  $r\rightarrow 1$ and $T\to +\infty$, that
  $|\overline{p_{1}p_2}|+|\overline{p_{3}p_4}|=|\overline{p_{2}p_3}|+|\overline{p_4p_1}|$,
  where $|\bullet|$ denotes (as in~\cite{cr}) the hyperbolic length of
  the curve $\bullet$ outside some disjoint horocycles at the ideal
  points $p_i$, identifying $\H^2$ with the corresponding horizontal
  slice. By the triangle inequality at infinity~\cite[Lemma 3]{cr} we
  get that $p_1,p_2,p_3,p_4$ must be cyclically ordered.

  We call $\Omega$ the ideal quadrilateral with vertices
  $p_{1},p_2,p_3,p_{4}$.  By the maximum principle using vertical
  geodesic planes, we get that $\pi({M})\subset\Omega$.  On the other
  hand, the geometry of the end of $M$ says that a neighborhood of
  $\partial\Omega$ is contained in $\pi(M)$. Since $M$ is complete and
  simply-connected, we conclude $\pi({M})=\Omega$.

  Now let us show that the normal vector of $M$ is never
  horizontal. Suppose there exists a point $P\in M$ such that
  $N_{3}(P)=0$. Let $\Gamma\times\R$ be the vertical geodesic plane
  tangent to $M$ at $P$. Since $M$ and $\Gamma\times\R$ have first
  contact order at $P$, their intersection consists of $k$ curves
  meeting at equals angles at $P$, with $k\geq 2$. Thus, there are at
  least four branches of $M\cap(\Gamma\times\R)$ leaving $P$ (see
  Figure~\ref{intersection}, left). Since $M$ is simply-connected, we
  deduce using the maximum principle with vertical planes that there
  cannot exists a compact cycle in $M\cap(\Gamma\times\R)$. Hence
  $\Gamma$ cannot intersect two edges of $\Omega$, so it must have
  some $p_i$ as an endpoint.  Denote by $\gamma=\gamma(t)$, $t\in\R$,
  the arc-length parameterized geodesic of $\H^2$ orthogonal to
  $\Gamma$ such that $\gamma(0)=\pi(P)$; and by $\Gamma_t$ the
  geodesic of $\H^2$ passing through $\gamma(t)$ orthogonally (in
  particular, $\Gamma_0=\Gamma$). For $\varepsilon>0$ small,
  $\Gamma_\varepsilon$ intersects two edges of $\Omega$, say
  $\overline{p_{1}p_{2}}$ and $\overline{p_{2}p_{3}}$, and the number
  of intersection curves between the vertical plane
  $\Gamma_\varepsilon\times\R$ and $M$ is at least two (see
  Figure~\ref{intersection}, right). But only one branch of the
  intersection curves can arrive to
  $\overline{p_{1}p_{2}}\times\{+\infty\}$ (resp.
  $\overline{p_{2}p_{3}}\times\{-\infty\}$), the other branch should be a compact loop, a contradiction.

  \begin{figure}\label{intersection}
    \begin{center}
      \includegraphics[height=5cm, width=6.5cm]{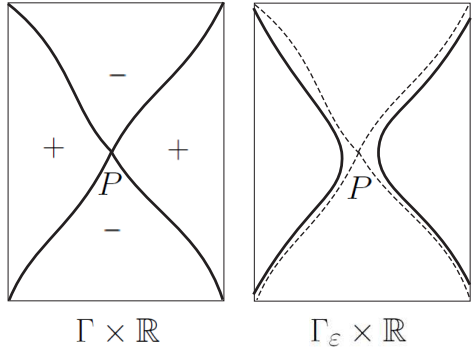}
    \end{center}
    \caption{Left: The nodal domains between $M$ and $\Gamma\times\R$
      at a point with horizontal normal vector.  Right: The
      intersection curves between $M$ and
      $\Gamma_\varepsilon\times\R$.}
    \label{intersection}
  \end{figure}


  We have prove then that, for any point $q\in\Omega$, the
  intersection of $\{q\}\times\R$ with $M$ is transverse. So the
  number of intersection points does not depend on $q$.  For $q$ near
  an edge of $\Omega$ this number is one. We conclude that $M$ is a
  graph over $\Omega$.
\end{proof}

\bibliographystyle{plain}

\mbox{}\\

\noindent
Juncheol Pyo\\
Department of Mathematics\\
 Pusan National University\\
  Busan 609-735, Korea\\
e-mail: \texttt{jcpyo@pusan.ac.kr}\\

\noindent
M. Magdalena Rodr\'\i guez\\
Departamento de Geometr\'\i a y Topolog\'\i a\\
Universidad de Granada\\
Fuentenueva, 18071, Granada, Spain\\
e-mail: \texttt{magdarp@ugr.es}

\end{document}